\theoremstyle{theorem}
\newtheorem{theorem}{Theorem}
\theoremstyle{definition}
\newtheorem*{note}{Note}
\newtheorem*{lemma}{Lemma}
\begin{document}
\title{Fibonacci Plays Billiards} 
\markright{Fibonacci Plays Billiards}
\author{Elwyn Berlekamp and Richard Guy} 
\date{2003-02-07}
\maketitle 

\begin{abstract}
A \textit{chain} is an ordering of the integers 1 to $n$ such that adjacent pairs have sums of a
particular form, such as squares, cubes, triangular numbers, pentagonal numbers, or Fibonacci
numbers.  For example  4 1 2 3 5  form a Fibonacci chain while 1 2 8 7 3 12 9 6 4 11 10 5 form a
triangular chain. Since 1 + 5 is also triangular, this latter forms a triangular \textit{necklace}.
A search for such chains and necklaces can be facilitated by the use of paths of billiard balls on
a rectangular or other polygonal billiard table.
\end{abstract}

At the July, 2002 Combinatorial Games Conference in
Edmonton we found Yoshiyuki Kotani looking for
values of $n$ which would enable him to arrange
the numbers 1 to $n$ in a chain so that adjacent
links summed to a perfect cube.  Part of such a
chain might be
$$\ldots\quad61\quad3\quad5\quad22\quad42\quad\ldots$$

He had seen the corresponding problem asked for squares.
Later Ed Pegg informed us that this latter problem, with
squares and with $n=15$, was proposed by Bernardo
Recaman Santos, of Colombia, at the 2000 World Puzzle
Championship.  More recently this has appeared as Puzzle 30
in \cite{Y}.

\begin{figure}[h]    
\begin{center}
\mbox{$(16\!\rightarrow)9\!\rightarrow\!7\!\rightarrow\!2\!\leftarrow\!
14\!\rightarrow\!11\!\rightarrow\!5\!\rightarrow\!4\!\leftarrow\!
12\!\leftarrow\!13\!\rightarrow\!3\!\leftarrow\!6\!\leftarrow\!
10\!\leftarrow\!15\!\rightarrow\!1\!\leftarrow\!8(\!\leftarrow\!17)$}
\end{center}
\caption{Solution(s) to Recaman's problem for $n=15$, 16, 17.}
\label{fig:RecamanProblem}
\end{figure}

This inspired Joe Kisenwether to ask for the numbers
1 to 32 to be arranged as a necklace whose neighboring beads
add to squares (Figure \ref{fig:necklace1}).

\begin{figure}[h]    
\begin{center}
\begin{tabular}{c@{\hspace{4pt}}c@{\hspace{4pt}}c
 @{\hspace{4pt}}c@{\hspace{4pt}}c@{\hspace{4pt}}c
 @{\hspace{4pt}}c@{\hspace{4pt}}c@{\hspace{4pt}}c}
 4 & 21 & 28 &  8 & 1  & 15 & 10 & 26 & 23  \\
32 &    &    &    &    &    &    &    &  2  \\
17 &    &    &    &    &    &    &    & 14 \\
19 &    &    &    &    &    &    &    & 22 \\
30 &    &    &    &    &    &    &    & 27 \\
 6 &    &    &    &    &    &    &    &  9 \\
 3 &    &    &    &    &    &    &    & 16 \\
13 &    &    &    &    &    &    &    & 20 \\
12 & 24 & 25 & 11 &  5 & 31 & 18 &  7 & 29 \\
\end{tabular}
\end{center}
\caption{A necklace with adjacent pairs of beads adding
to squares.}
\label{fig:necklace1}
\end{figure}

The extension to cubes was suggested by Nob Yoshigahara.
The least $n$ for such a chain or necklace may be greater
than 300.  But it seems certain that such chains and
necklaces can be found for all sufficiently large $n$,
and for any other powers or polynomials, e.g., figurate
numbers of various kinds; see Figure~\ref{fig:necklace2}.

\begin{figure}[h]       
\begin{center}
\begin{tabular}{c@{\hspace{3pt}}c@{\hspace{3pt}}c
 @{\hspace{3pt}}c@{\hspace{3pt}}c@{\hspace{0pt}}c
 @{\hspace{0pt}}c@{\hspace{2pt}}c@{\hspace{2pt}}c}
& & & & 3 & & & & \\
& & & 7 & & 12 & & & \\
& & 8 & & & & 9 & & \\
&  2 &  & & & & & 6 & \\
1 & & 5 & & 10 & & 11 & & 4 \\
\end{tabular}
\end{center}
\caption{A necklace with adjacent pairs of beads adding to
  triangular numbers.}
\label{fig:necklace2}
\end{figure}

So we asked about more rapidly divergent sequences.
For powers of 2, it is not possible to connect chains
of odd numbers to chains of even numbers, and there
are similar difficulties with powers of larger numbers.

However, the corresponding problem with neighbors
summing to Fibonacci numbers, $F_0=0$, $F_1=1$,
$F_{k+1}= F_k + F_{k-1}$, has a better balanced solution.

We can draw a graph with the numbers 1 to $n$ as vertices
and edges joining pairs whose sum is a Fibonacci number:
for $n=11$, this is Figure \ref{fig:FibGraph}.
\begin{figure}[h]
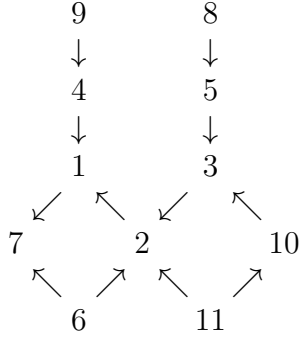
       
\begin{center}
\begin{tabular}{c@{\hspace{3pt}}c@{\hspace{3pt}}c@{\hspace{3pt}}
 c@{\hspace{3pt}}c@{\hspace{3pt}}c@{\hspace{2pt}}c@{\hspace{2pt}}
 c@{\hspace{2pt}}c}
& & 9 & & & & 8 & & \\
& & $\downarrow$ & & & & $\downarrow$ & & \\
& & 4 & & & & 5 & & \\
& & $\downarrow$ & & & & $\downarrow$ & & \\
& & 1 & & & & 3 & & \\
& $\swarrow$ & & $\nwarrow$ & & $\swarrow$ & & $\nwarrow$ & \\
7 & & & & 2 & & & & 10
\\
& $\nwarrow$ & & $\nearrow$ & & $\nwarrow$ & & $\nearrow$ & \\
& & 6 & & & & 11
\end{tabular}
\end{center}
\caption{Graph whose adjacencies are Fibonacci sums}
\label{fig:FibGraph}
\end{figure}
The arrows are drawn from the larger to the smaller number
to emphasize that the larger number is not part of the graph
unless the smaller is already present.  From the graph we
can read off 1 2; 1 2 3; 4 1 2 3; 4 1 2 3 5; 4 1 7 6 2 3 5;
4 1 7 6 2 3 5 8; 9 4 1 7 6 2 3 5 8 and 9 4 1 7 6 2 11 10 3 5 8.
We can also verify that 6 and 10 can't be included in a chain
unless some larger number is also present (in the former case
4, 5 and 6 are monovalent vertices and all three can't be ends
of the chain; in the latter case, 8, 9 and 10).  Evidently
the Law of Small Numbers is at work.  Six and ten are the
only numbers which are not powers of primes.  Is there some
connexion with projective planes?  No, but the Law of Small
Numbers is indeed at work, but the villains are 9 and 11.

\begin{theorem} \label{th:1}
There is a chain formed with the numbers 1 to
$n$ with each adjacent pair adding to a Fibonacci number, just
if $n=9$, 11, or $F_k$ or $F_k - 1$, where $F_k$ is a Fibonacci
number with $k\geq4$.  The chain is essentially unique.
\end{theorem}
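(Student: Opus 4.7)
The plan is to analyse the Fibonacci-sum graph $G_n$ on vertex set $\{1,\ldots,n\}$ with an edge between $a$ and $b$ whenever $a+b$ is a Fibonacci number, and to show that $G_n$ admits a Hamiltonian path exactly for the stated $n$. I would proceed by induction on the Fibonacci window containing $n$: write $F_K \leq n < F_{K+1}$ and treat separately the four positions $n = F_K$, $F_K+1$, $F_{K+1}-2$, $F_{K+1}-1$ in each window.

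The first step is a degree count. The degree of vertex $m$ in $G_n$ equals the number of Fibonacci numbers in the half-open interval $(m,\,n+m]$, minus $1$ if $2m$ is itself Fibonacci. A direct calculation shows that precisely the vertices $m$ lying in $[F_K,\,\min(n,\,F_{K+2}-n-1)]$ have degree $1$, and this set has cardinality $\min(n-F_K+1,\,F_{K+1}-n)$. Since a Hamiltonian path has at most two endpoints, the window positions $n \in [F_K+2,\,F_{K+1}-3]$ are ruled out immediately, leaving only $n \in \{F_K,\,F_K+1,\,F_{K+1}-2,\,F_{K+1}-1\}$ as candidates in each window. This already explains why $n=10$ fails while $n=9$ and $n=11$ survive.

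For the two permitted candidates $n = F_k$ and $n = F_k - 1$, I would construct the chain inductively. The vertices added when $n$ jumps from $F_k$ to $F_{k+1}$ are exactly those in $(F_k,\,F_{k+1}]$, and each such $m$ has only two possible neighbours in $G_{F_{k+1}}$, namely $F_{k+1}-m$ and, when in range, $F_{k+2}-m$. These pairings force a zig-zag of length $F_{k-1}$ that splices into the previous chain at its endpoint --- geometrically, the billiard trajectory of a $45^\circ$ ball on a table with sides proportional to consecutive Fibonacci numbers, as the abstract advertises. Uniqueness then follows by degree-$2$ forcing: the unique degree-$1$ vertex $F_k$ (or the pair of such vertices when $n = F_k - 1$) must be an endpoint, and at every interior degree-$2$ vertex both incident edges lie in the chain, so the chain is determined link by link from the endpoints inward.

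The main obstacle is ruling out the borderline candidates $n = F_K+1$ and $n = F_{K+1}-2$ for $K \geq 7$, since the small-$K$ instances $n=9,\,11$ (with $K=6$) show that the degree count alone is not decisive. My plan is to exhibit, for each such $n$, a \emph{trapped cluster}: a vertex set $C$ containing a cycle and having exactly one vertex with a neighbour outside $C$. Any Hamiltonian path must enter $C$ through that single gateway and then terminate inside $C$, forcing one path-endpoint into $C$; this contradicts the degree-$1$ analysis, which puts both endpoints near $F_K$. For $n=14$ the four-cycle $1\text{-}4\text{-}9\text{-}12$ (whose only exit is via vertex $1$) illustrates the mechanism, and I would use the Zeckendorf representation to produce analogous clusters for all $K \geq 7$; establishing a uniform family of such clusters is the most delicate step.
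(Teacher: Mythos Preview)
Your inductive construction is the same as the paper's, and your degree-count idea is sound, but you have overlooked a crucial family of monovalent vertices that the paper's argument hinges on: the ``half-Fibonacci'' vertices $\tfrac{1}{2}F_{3j}$.  You correctly note that the degree of $m$ is reduced by one when $2m$ is Fibonacci, yet your stated set of degree-$1$ vertices, namely $[F_K,\min(n,F_{K+2}-n-1)]$, ignores this correction entirely.  In fact the second endpoint of the chain is almost never ``near $F_K$'' as you assert: for $n=F_k$ or $F_k-1$ with $k\not\equiv 1\pmod 3$ the two monovalent vertices are $F_k$ (resp.\ $F_{k-1}$) and $\tfrac{1}{2}F_{3j}$ for the appropriate $j$ (e.g.\ $17=\tfrac{1}{2}F_9$ is the far end of the $20$- and $21$-chains in the paper).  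When $k\equiv 1\pmod 3$ there is only \emph{one} monovalent vertex, and the other end of the chain sits inside the $4$-cycle (the paper's ``ball'') $\{\tfrac{1}{2}F_{3m},\,\tfrac{1}{2}F_{3m+3},\,\tfrac{1}{2}(F_{3m+2}+F_{3m+4}),\,\tfrac{3}{2}F_{3m+3}\}$; this is exactly why uniqueness is only ``essential'' in those cases, and your degree-$2$ forcing argument breaks down there.

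Once you incorporate the $\tfrac{1}{2}F_{3j}$ vertices, your trapped-cluster programme becomes much easier than you feared.  For the borderline candidates $n=F_K+1$ and $n=F_{K+1}-2$, a short check modulo $3$ shows that unless $K\equiv 1\pmod 3$ (resp.\ $K\equiv 0\pmod 3$) the vertex $\tfrac{1}{2}F_{3j}$ is itself monovalent, giving three degree-$1$ vertices and an immediate contradiction; no cluster is needed.  In the remaining residue class the relevant ``ball'' $4$-cycle is present and has its sole exit through $\tfrac{1}{2}F_{3m}$, furnishing exactly the trapped cluster you want --- so the single example you gave at $n=14$ really is the generic picture, not an accident requiring Zeckendorf machinery.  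The paper organises the same dichotomy by tracking when the ball forms and when $\tfrac{1}{2}F_{3j}$ is pendant, and its proof of non-existence, though informal, rests on precisely these two mechanisms.
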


\begin{proof}
For $n\leq11$ ($k=4$, 5, 6) this follows from
Figure \ref{fig:necklace1}.  If $k=7$, then $12=F_7-1$ can be appended to the
11-chain, forming a 4-circuit; also, $F_7=13$ can be appended
at the other end, as shown in Figure \ref{fig:ball+chain}.

\begin{figure}[h]
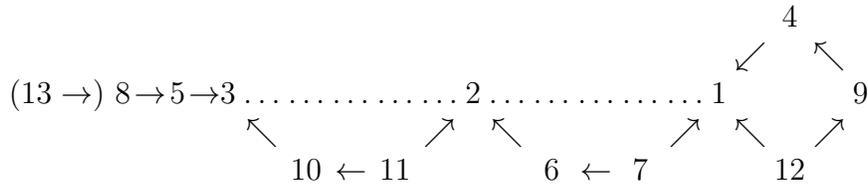
     
\begin{center}
\begin{tabular}{r@{\hspace{2pt}}c@{\hspace{3pt}}c@{\hspace{3pt}}c
  @{\hspace{3pt}}c@{\hspace{3pt}}c@{\hspace{3pt}}c@{\hspace{3pt}}c
  @{\hspace{3pt}}c@{\hspace{3pt}}c@{\hspace{3pt}}c@{\hspace{3pt}}c
  @{\hspace{3pt}}c@{\hspace{3pt}}c@{\hspace{3pt}}c@{\hspace{3pt}}c
  @{\hspace{3pt}}c@{\hspace{3pt}}c}
& & & & & & & & & & & & & & & 4 & & \\
& & & & & & & & & & & & & & $\swarrow$ & & $\nwarrow$ & \\
$(13\rightarrow)\ 8\!\rightarrow\!5\!\rightarrow\!$ & 3 & $\ldots$
 & $\ldots$ & $\ldots$ & $\ldots$ & $\ldots$ & 2 & $\ldots$
 & $\ldots$ & $\ldots$ & $\ldots$ & $\ldots$  & 1 & & & & 9 \\
& & $\nwarrow$ & & & & $\nearrow$ & & $\nwarrow$ & & &
 & $\nearrow$ & & $\nwarrow$ & & $\nearrow$ & \\
& & & 10 & $\leftarrow$ & 11 & & & & 6 & $\leftarrow$
 & 7 & & & & 12
\end{tabular}
\end{center}
\caption{Ball and chain for 12 or 13.}
\label{fig:ball+chain}
\end{figure}

Although 2 is adjacent to 1 and 3, the chain for 12 or 13
is essentially unique, except that the right tail may be
12 or 4 for either chain.  None of the Fibonacci chains
that we have seen will form a necklace; nor will any others.

The rest of the proof is by induction, but the comparatively
simple pattern is made more difficult to describe by the
fact that only every third Fibonacci number is even.

Balls and chains occur just for $F_{3m+1}-1$ and
$F_{3m+1}$ with $m\geq1$; other cases are simple chains.
The chain 1---2---3 can be thought of as the
``zeroth ball'' (Figure \ref{fig:zeroball+chain}).

\begin{figure}[h]
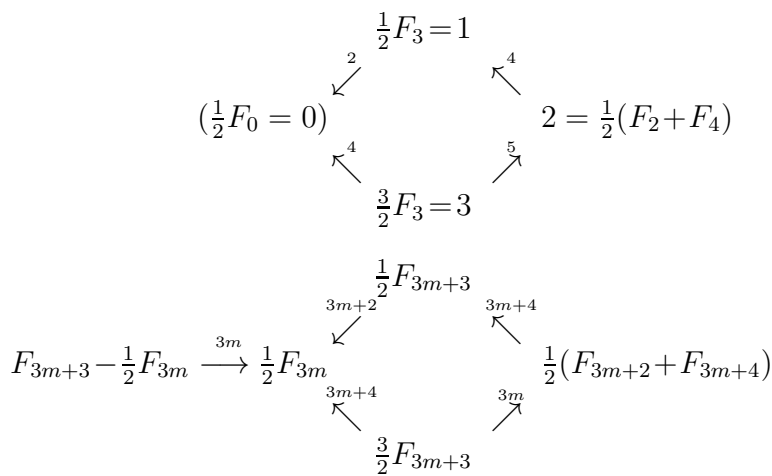
    
\begin{center}
\begin{tabular}{r@{\hspace{-5pt}}c@{\hspace{-2pt}}c
  @{\hspace{1pt}}c@{\hspace{1pt}}l}

 & & ${1\over2}F_3\!=\!1$ & & \\
 & $\stackrel{{\ }_2}{\swarrow}$ &
  & $\stackrel{{\ }_4}{\nwarrow}$ & \\
$({1\over2}F_0=0)$ & & & & $2={1\over2}(F_2\!+\!F_4)$ \\
  & $\stackrel{{\ }_4}{\nwarrow}$ &
   & $\stackrel{{\ }_5}{\nearrow}$ & \\
 & & ${3\over2}F_3\!=\!3$ & & \\[12pt]
   & & ${1\over2}F_{3m+3}$ & & \\
  & $\stackrel{{\ }_{3m+2}}{\swarrow}$ &
  & $\stackrel{{\ }_{3m+4}}{\nwarrow}$ & \\[-6pt]
$F_{3m+3}\!-\!{1\over2}F_{3m}
     \stackrel{{\ }^{3m}}{\longrightarrow}{1\over2}F_{3m}$
     & & & & ${1\over2}(F_{3m+2}\!+\!F_{3m+4})$ \\
    & $\stackrel{{\ }_{3m+4}}{\nwarrow}$ &
   & $\stackrel{{\ }_{3m}}{\nearrow}$ & \\
    & & ${3\over2}F_{3m+3}$ & &

\end{tabular}
\end{center}
\caption{Zeroth ball and general ball.  Small
   numbers above the arrows are ranks of Fibonacci
   numbers to which pairs of linked numbers sum.}
\label{fig:zeroball+chain}
\end {figure}

There are no chains for $n=14$, 15, 16, 17, 18 or 19,
since, when we successively append these numbers to the
graph, the first three are monovalent vertices, as also
is 17 $(={1\over2}F_9)$, though this last can be
accommodated by breaking the ball and allowing 17 to
become an end of the chain.  When we adjoin 18 \& 19
they repectively allow 16 \& 15 to become bivalent,
but a chain is not reachieved until we append $F_8-1=20$
at 1 \& 14.

Note that all the partitions (5\&3, 2\&6, 7\&1) of
$F_6=8$ into two distinct parts have been bypassed by
the partitions of $F_9=34$ into parts of size less
than $F_8=21$, which itself can then be appended
to form a new tail to the chain.  Because $F_9$ is
even, as is every third Fibonacci number,
${1\over2}F_9=17$ can only be appended to
4 $(={1\over2}F_6)$.

\begin{figure}[h]
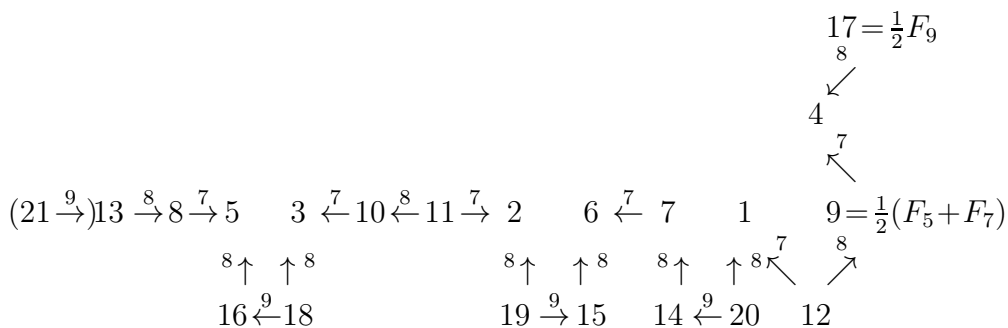
      
\begin{center}
\begin{tabular}{r@{\hspace{-3pt}}c@{\hspace{-1pt}}c
 @{\hspace{-1pt}}c@{\hspace{1pt}}c@{\hspace{1pt}}c
 @{\hspace{1pt}}c@{\hspace{1pt}}c@{\hspace{1pt}}c
 @{\hspace{1pt}}c@{\hspace{1pt}}c@{\hspace{1pt}}c
 @{\hspace{1pt}}c@{\hspace{1pt}}c@{\hspace{-2pt}}c
 @{\hspace{-12pt}}l}
 & & & & & & & & & & & & & & & $17\!=\!{1\over2}F_9$ \\[-2pt]
 & & & & & & & & & & & & & & $\stackrel{8}{\swarrow}$ & \\[-1pt]
 & & & & & & & & & & & & & 4 & & \\[-2pt]
 & & & & & & & & & & & & & & $\stackrel{7}{\nwarrow}$ & \\
$(21\!\stackrel{9}{\rightarrow})\!13\stackrel{8}{\rightarrow}
   \!8\!\stackrel{7}{\rightarrow}$
   & 5 & & 3 & $\stackrel{7}{\leftarrow}\!10
   \!\stackrel{8}{\leftarrow}\!11\!\stackrel{7}{\rightarrow}$
   & 2 & & 6 & $\stackrel{7}{\leftarrow}$ & 7 & & 1
   & & & & $9\!=\!{1\over2}(F_5\!+\!F_7)$ \\
 & ${\ }^8\!\uparrow$ & & $\uparrow\hspace{-4pt}{\ }^8$ &
 & ${\ }^8\!\uparrow$ & & $\uparrow\hspace{-4pt}{\ }^8$ &
 & ${\ }^8\!\uparrow$ & & $\uparrow\hspace{-4pt}{\ }^8$
 & $\stackrel{7}{\nwarrow}$ & & $\stackrel{8}{\nearrow}$ &
\\    & 16 & $\stackrel{9}{\leftarrow}$ & 18 & & 19
 & $\stackrel{9}{\rightarrow}$ & 15 & & 14
 & $\stackrel{9}{\leftarrow}$ & 20 & & 12 & &

\end{tabular}
\end{center}
\caption{Fibonacci chains for $F_8-1=20$ and $F_8=21$.}
\label{fig:FibChain}
\end{figure}

If we continue, we find that a chain cannot again be
achieved until we have replaced the six partitions of
$F_7=13$ by links of partitions of $F_{10}=55$ into
two parts of size at most $F_9-1=33$ (Figure \ref{fig:links}).

\begin{figure}[h]
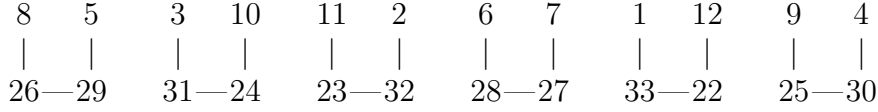
    
\begin{center}
\begin{tabular}{c@{\hspace{1pt}}c@{\hspace{1pt}}c
@{\hspace{21pt}}c@{\hspace{1pt}}c@{\hspace{1pt}}c
@{\hspace{21pt}}c@{\hspace{1pt}}c@{\hspace{1pt}}c
@{\hspace{21pt}}c@{\hspace{1pt}}c@{\hspace{1pt}}c
@{\hspace{21pt}}c@{\hspace{1pt}}c@{\hspace{1pt}}c
@{\hspace{21pt}}c@{\hspace{1pt}}c@{\hspace{1pt}}c}
8 & & 5 & 3 & & 10 & 11 & & 2 & 6 & & 7 & 1 & & 12 & 9 & & 4 \\
$\mid$ & & $\mid$ & $\mid$ & & $\mid$ & $\mid$ & & $\mid$
 & $\mid$ & & $\mid$ & $\mid$ & & $\mid$ & $\mid$ & & $\mid$ \\
26 & --- & 29 & 31 & --- & 24 & 23 & --- & 32 & 28 & --- & 27
 & 33 & --- & 22 & 25 & --- & 30
\end{tabular}
\end{center}
\caption{Links extending the chain to $F_9-1=33$.}
\label{fig:links}
\end{figure}

$F_9=34$ can then be appended to $21=F_8$ to make a new
tail to the chain.

The next chain is for $F_{10}-1=54$, obtained by
appending links of partitions of $F_{11}=89$ into
parts of size at most 54: 54---35, 53---36, $\ldots$,
45---44 to the ten partitions 1---20, 2---19, $\ldots$,
10---11, of $F_8=21$. The chain for $F_{10}=55$ can be
formed by appended it at the end $F_9=34$.

Note that the link ---51---38--- {\bf need not
immediately} replace the end link, \\ ---4---17, of
the chain, but the latter can remain as part of a new
ball, the case $m=2$ of Figure \ref{fig:zeroball+chain}, until we wish to
append ${1\over2}F_{12}=72$, which we will do when
forming the 88- and 89-chains.

We have seen several stages of the induction.
In Figure \ref{fig:ball+chain} the numbers between $F_5=5$ and $F_6=8$
and $F_6$ itself are appended, as also are the numbers
between $F_6=8$ and $F_7=13$ and 13 itself. In Figures
7 and 8, the numbers between $F_k$ and $F_{k+1}$ are
appended for $k=7$ and 8 respectively.  Note that in
the former ${1\over2}F_{k+2}=17$ is appended to
${1\over2}F_{k-1}=4$.

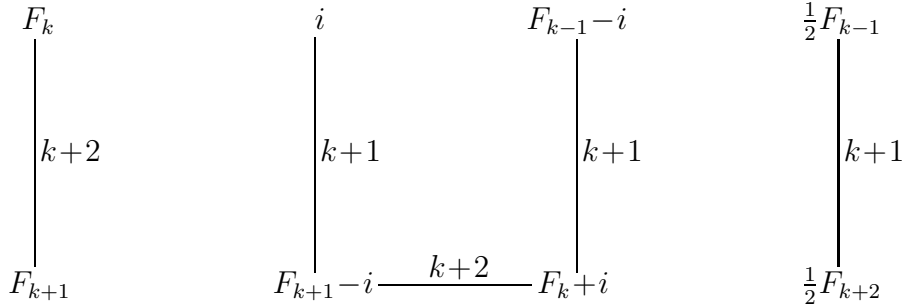
\begin{figure}[h]     
\begin{picture}(350,120)(-30,0)
\put(0,0){$F_{k+1}$}
\put(100,0){$F_{k+1}\!-\!i$}
\put(200,0){$F_k\!+\!i$}
\put(300,0){${1\over2}F_{k+2}$}
\put(5,100){$F_k$}
\put(116,100){$i$}
\put(196,100){$F_{k-1}\!-\!i$}
\put(300,100){${1\over2}F_{k-1}$}
\put(10,10){\line(0,1){86}}
\put(116,8){\line(0,1){89}}
\put(215,8){\line(0,1){88}}
\put(314,10){\line(0,1){86}}
\put(140,3){\line(1,0){58}}
\put(12,50){$k\!+\!2$}
\put(118,50){$k\!+\!1$}
\put(217,50){$k\!+\!1$}
\put(316,50){$k\!+\!1$}
\put(159,6){$k\!+\!2$}
\end{picture}
\medskip
\caption{Extending $F_k\!-\!1$ and $F_k$ chains to
those for $F_{k+1}\!-\!1$ and $F_{k+1}$  The
appendage on the right is required only when $k=3m+1$.}
\label{fig:FibExtension}
\end{figure}

Generally, as in Figure \ref{fig:FibExtension}, we append the pairs of
numbers $F_k+i$, $F_{k+1}-i$ for
$1\leq i\leq {1\over2}(F_{k-1}-1)$, except that, when
$k=3m+1$, ${1\over2}(F_{k-1}-1)$ is not an integer and
we have a new tail, ${1\over2}F_{k+2}$, which {\bf is}
an integer, appended to ${1\over2}F_{k-1}$.
\end{proof}

These last numbers are denominators of the convergents
to the continued fraction for $\sqrt{5}$, sequence
A001076 in Neil Sloane's Online Encyclopedia of
Integer Sequences~\cite{S}.


The proof can be made much more perspicuous with
billiards diagrams, which will also throw light
on the other kinds of chain in which we are
interested.

\begin{figure}[h]  
\begin{picture}(200,120)(-150,-20)
\put(30,0){\line(1,1){75}}
\put(60,0){\line(1,1){60}}
\put(90,0){\line(1,1){30}}
\put(0,30){\line(1,-1){30}}
\put(0,60){\line(1,-1){60}}
\put(15,75){\line(1,-1){75}}
\put(45,75){\line(1,-1){75}}
\put(75,75){\line(1,-1){45}}
\put(105,75){\line(1,-1){15}}
\thicklines
\put(0,0){\line(1,1){75}}
\put(0,30){\line(1,1){45}}
\put(0,60){\line(1,1){15}}
\put(-0.5,0.5){\line(1,1){75}}
\put(-0.5,30.5){\line(1,1){45}}
\put(-0.5,60.5){\line(1,1){15}}
\put(28,-10){1}
\put(57,-10){2}
\put(87,-10){3}
\put(120,-10){4}
\put(123,26){5}
\put(123,56){6}
\put(102,77){7}
\put(72,77){8}
\put(42,77){9}
\put(10,77){10}
\put(-12,57){11}
\put(-12,27){12}
\put(-10,-10){13}
\tiny
\put(118,73){(6${1\over2}$)}
\put(-15,73){(10${1\over2}$)}
\normalsize
\end{picture}
\caption{Fibonacci plays billiards. The thick upward
paths connect 21-sums. The other upward paths connect
8-sums.  The down paths connect 13-sums.}
\label{fig:FibBillards}
\end{figure}
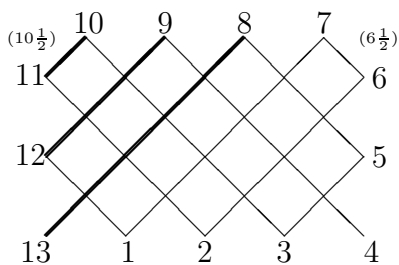

Figure \ref{fig:FibBillards} is equivalent to Figure \ref{fig:ball+chain}.  The `ball' may
be achieved by connecting the Fibonacci sum 1---+---4 = 5.

This billiard table viewpoint is useful for depicting
long chains whose adjacent pair-sums all lie in a set
of only three or four elements.  If successive corners
are at $a$, $b$, $c$, $d$, where $a<b<c<d$, then the
semi-perimeter must be $c-a = d-b$, and the perimeter is
$P = 2(c-a) = 2(d-b)$. One side must be $b-a=d-c$,
and the other must be $c-b=a-d\pmod P$. Viewed
along the 45 degree path taken by the billiard
ball, each integer along the side of the table
has valence 2, and each integer in a corner has
valence 1. Hence, if the corners include 2 integers
(called pockets) and 2 non-integers, then the path
beginning at either pocket must eventually terminate
in the other pocket.

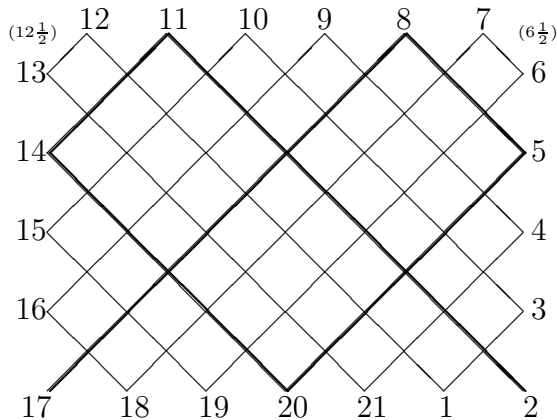
\begin{figure}[h]  
\begin{picture}(200,160)(-100,-5)
\put(30,0){\line(1,1){135}}
\put(60,0){\line(1,1){120}}
\put(90,0){\line(1,1){90}}
\put(120,0){\line(1,1){60}}
\put(150,0){\line(1,1){30}}
\put(0,0){\line(1,1){135}}
\put(0,30){\line(1,1){105}}
\put(0,60){\line(1,1){75}}
\put(0,90){\line(1,1){45}}
\put(0,120){\line(1,1){15}}
\put(0,30){\line(1,-1){30}}
\put(0,60){\line(1,-1){60}}
\put(0,90){\line(1,-1){90}}
\put(0,120){\line(1,-1){120}}
\put(15,135){\line(1,-1){135}}
\put(45,135){\line(1,-1){135}}
\put(75,135){\line(1,-1){105}}
\put(105,135){\line(1,-1){75}}
\put(135,135){\line(1,-1){45}}
\put(165,135){\line(1,-1){15}}
\thicklines
\put(1,0){\line(1,1){135}}
\put(91,0){\line(1,1){90}}
\put(1,90){\line(1,1){45}}
\put(1,90){\line(1,-1){90}}
\put(46,135){\line(1,-1){135}}
\put(136,135){\line(1,-1){45}}
\put(148,-10){1}
\put(180,-10){2}
\put(183,27){3}
\put(183,57){4}
\put(183,87){5}
\put(183,117){6}
\put(162,137){7}
\put(132,137){8}
\put(102,137){9}
\put(72,137){10}
\put(42,137){11}
\put(12,137){12}
\put(117,-10){21}
\put(87,-10){20}
\put(57,-10){19}
\put(27,-10){18}
\put(-10,-10){17}
\put(-12,27){16}
\put(-12,57){15}
\put(-12,87){14}
\put(-12,117){13}
\tiny
\put(178,134){(6${1\over2}$)}
\put(-15,134){(12${1\over2}$)}
\normalsize
\end{picture}
\medskip
\caption{A billiard table with $A=4$, $B=13$, $C=25$,
$D=34$ and perimeter $P=21$.  The double-sides
$B-A=9$ and $C-B=12$ are not relatively prime.}
\label{fig:BilliardTable}
\end{figure}

Figure \ref{fig:BilliardTable} shows a rectangle of perimeter 21, whose
corners are at $a=2$, $b=6.5$, $c=12.5$, $d=17$.
The sequence between pockets (thick lines) is
2, 11, 14, 20, 5, 8, 17. This sequence fails to
reach many of the other integers along the perimeter,
which lie in the following cycle:
1, 3, 10, 15, 19, 6, 7, 18, 16, 9, 4, 21, 13, 12, 1.
The question of which rectangular billiard tables yield
a single covering path and which yield a degeneracy
of this sort is answered by the following lemma.

\begin{lemma}
Let $A$, $B$, $C$, $D$, be positive integers
such that $A < B < C < D$ and $C-A = D-B$. Let
$a = A/2$; $b = B/2$; $c = C/2$, and $d = D/2$.
Further suppose that exactly two of $a$, $b$, $c$, $d$
are integers, so that the corresponding billiard table
has two corner pockets. Then the 45 degree path
between the pockets touches {\bf all} of the integers
along the perimeter just if the rectangle's
double-sides, $B-A$ and $C-B$, are relatively prime.
\end{lemma}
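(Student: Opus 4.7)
My plan is to apply the standard unfolding trick for billiards: reflect the rectangle repeatedly across its walls so that the $45^\circ$ trajectory straightens into a line of slope $1$ in a plane tiled by reflected copies. After scaling every length by $2$, the rectangle becomes $[0,p]\times[0,q]$ with $p = B-A$ and $q = C-B$, and a short check shows that the integer perimeter positions of the original correspond to exactly those boundary points $(X,Y)$ with $X+Y$ even. Placing the starting pocket at the origin, the unfolded trajectory is the ray $Y = X$, and the images of the two pockets form a subset of the tile-corner lattice $\{(kp, lq) : k, l \in \mathbb{Z}\}$; a case split on the parities of $p$ and $q$ (both odd, or exactly one even) determines which congruence classes of $(k,l)$ correspond to pocket images.

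Setting $d = \gcd(p,q)$, I would next show that in each of these parity cases the minimal positive $t$ for which $(t,t)$ is a pocket image is $t = pq/d = \operatorname{lcm}(p,q)$, attained at $k = q/d$, $l = p/d$, and that this endpoint folds back to the \emph{other} pocket. Along the resulting segment from $(0,0)$ to $(pq/d, pq/d)$, the line $Y=X$ crosses vertical tile-edges $X = kp$ at $(kp, kp)$ for $1 \le k \le q/d - 1$ and horizontal tile-edges $Y = lq$ at $(lq, lq)$ for $1 \le l \le p/d - 1$; the minimality of $\operatorname{lcm}(p,q)$ guarantees that no two of these interior crossings coincide and that no interior tile-corner is met. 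Since $X+Y$ equals $2kp$ or $2lq$ at each crossing, it is automatically even, so each crossing sits at an integer perimeter position, and together with the two pockets the trajectory meets $(q/d - 1) + (p/d - 1) + 2 = (p+q)/d$ integer perimeter positions in the unfolded plane.

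To convert this count into distinct points on the original perimeter, I would invoke the valence observation already made in the paper: every integer perimeter position has degree at most $2$ in the billiard graph, with degree $1$ only at the pockets, so the trajectory is a simple walk and cannot revisit a vertex before terminating at the other pocket. Hence the $(p+q)/d$ unfolded crossings correspond to $(p+q)/d$ \emph{distinct} integer positions on the original perimeter. Since the total number of integer perimeter positions is $p+q$, the trajectory meets them all if and only if $(p+q)/d = p+q$, i.e.\ $\gcd(B-A,\, C-B) = 1$, which is the lemma. The main obstacle is the parity case analysis needed to identify $(pq/d, pq/d)$ as an image of the \emph{other} pocket and to rule out any earlier point of $Y=X$ being a pocket image; the argument has the same form in all three cases, but each must be checked separately.
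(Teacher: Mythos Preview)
Your unfolding-and-counting argument is correct, and the parity case-split you flag as the main obstacle does go through cleanly: in each of the three cases ($p$ odd, $q$ odd; $p$ even, $q$ odd; $p$ odd, $q$ even) the first tile-corner on $Y=X$ is at $t=\operatorname{lcm}(p,q)$ with $(k,l)=(q/d,p/d)$, and checking parities confirms it is always the \emph{other} pocket, never a non-pocket corner. One small wording issue: you should assert that $\operatorname{lcm}(p,q)$ is the first \emph{corner} image of any kind (not merely the first pocket image), since a non-pocket corner hit would also terminate or reverse the trajectory; but since the first corner image turns out to be a pocket in every case, this is cosmetic.

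The paper takes a different and somewhat shorter route. It splits the two directions: degeneracy is handled by a colouring argument (if $d=\gcd(B-A,C-B)>1$, colour the residue class of the pockets $\bmod\ d$ and observe the path cannot leave that class), and non-degeneracy is handled by pure modular arithmetic on the perimeter coordinate. Since $A\equiv C$ and $B\equiv D\pmod P$, every bounce sends position $x$ to either $A-x$ or $B-x\pmod P$, so two bounces shift $x$ by $\pm(B-A)\pmod P$. Because $\gcd(B-A,P)=\gcd(B-A,C-B)=1$, the alternating subsequence of bounce positions is an arithmetic progression with step coprime to $P$ and therefore cannot close into a cycle; combined with the degree-$\le 2$ structure, the graph has no cycle components and the single pocket-to-pocket path is everything. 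Your approach trades this modular one-liner for the unfolding picture and a single counting identity $(p+q)/d$ that handles both directions at once; theirs avoids the unfolding machinery and the parity case-split entirely.
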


\begin{note}
In Figure \ref{fig:BilliardTable} the sides are $6.5-2=4.5$ and
$12.5-6.5=6$, so the double-sides are 9 and 12.
They have a common factor of 3. So we could color
each integer of shape $3k+2$ and both pockets
would be colored. Every integer along the ball's
path would then also be colored. In general,
this argument shows that a degeneracy occurs
whenever the double-sides are not relatively prime.
\end{note}


\noindent {\bf Proof of non-degeneracy}. If the double-sides
are prime to each other, and hence to the perimeter
$P=C-A=D-B,$ so that, mod $P$, $A\equiv C$ and
$D\equiv B$, then consider any two integers
separated by exactly one bounce along the ball's
path. If the bounce is at $x$, these integers,
mod $P$, are at $A-x$ and $B-x$, and the distance
between them is $B-A\equiv D-C$ if measured in one
direction mod $P$, or
$A-B\equiv C-D\equiv A-D\equiv C-B$ if measured in
the other direction. But since $B-A$ is a
double-side, which is relatively prime to $P$,
it follows that the sequence, obtained by looking
at {\bf alternate} bounce-points along the ball's
path, cannot cycle back to itself, mod $P$, without
first reaching a pocket. Since this is true for
all values of $x$, the ball-path from one pocket to
the other must go through every integer point on
the rectangle's perimeter.


We can take three corners of a rectangle as
the halves of any three consecutive Fibonacci
numbers (recall that the corners are allowed to
be half-integers). The perimeter of this
rectangle will be the middle of these three
Fibonacci numbers.  Since any pair of adjacent
Fibonacci numbers is relatively prime, the path
from pocket to pocket is complete.

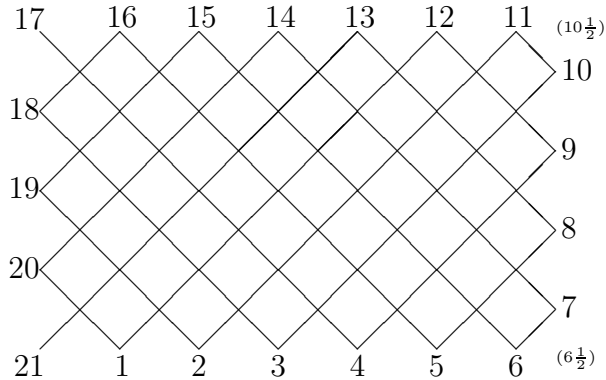
\begin{figure}[h]  
\begin{picture}(220,150)(-100,-10)
\put(30,0){\line(1,1){120}}
\put(60,0){\line(1,1){120}}
\put(90,0){\line(1,1){105}}
\put(120,0){\line(1,1){75}}
\put(150,0){\line(1,1){45}}
\put(180,0){\line(1,1){15}}
\put(75,75){\line(1,1){45}}
\put(105,75){\line(1,1){15}}
\put(0,0){\line(1,1){120}}
\put(0,30){\line(1,1){90}}
\put(0,60){\line(1,1){60}}
\put(0,90){\line(1,1){30}}
\put(0,30){\line(1,-1){30}}
\put(0,60){\line(1,-1){60}}
\put(0,90){\line(1,-1){90}}
\put(0,120){\line(1,-1){120}}
\put(30,120){\line(1,-1){120}}
\put(60,120){\line(1,-1){120}}
\put(90,120){\line(1,-1){105}}
\put(120,120){\line(1,-1){75}}
\put(150,120){\line(1,-1){45}}
\put(180,120){\line(1,-1){15}}
\put(28,-10){1}
\put(57,-10){2}
\put(87,-10){3}
\put(117,-10){4}
\put(147,-10){5}
\put(177,-10){6}
\put(197,12){7}
\put(197,42){8}
\put(197,72){9}
\put(197,102){10}
\put(175,122){11}
\put(145,122){12}
\put(115,122){13}
\put(85,122){14}
\put(55,122){15}
\put(25,122){16}
\put(-10,121){17}
\put(-12,87){18}
\put(-12,57){19}
\put(-12,27){20}
\put(-10,-10){21}
\tiny
\put(195,-5){(6${1\over2}$)}
\put(195,120){(10${1\over2}$)}
\normalsize
\end{picture}
\caption{A billiard table giving a Fibonacci chain of
length $P=21$.}
\label{fig:BilliardTable2}
\end{figure}

\textbf{Square chains.}  For the `square' chains and
necklaces which we mentioned at the outset, Ed Pegg and Edwin Clark have verified that there are
chains for $n=15$, 16, 17, 23, 25 to 31 and necklaces (and hence chains) for $n=32$ upwards. The
existence problem was solved quite recently; more in the appendix at the end.

The billiards technique allows us to construct arbitrarily large specimens. Figure
\ref{fig:BilliardTable3} shows how our billiard table technique can be used to find a `square'
chain of length 16.


\begin{figure}[h]  
\begin{picture}(150,120)(-130,0)
\put(30,0){\line(1,1){105}}
\put(60,0){\line(1,1){75}}
\put(90,0){\line(1,1){45}}
\put(120,0){\line(1,1){15}}
\put(0,0){\line(1,1){105}}
\put(0,30){\line(1,1){75}}
\put(0,60){\line(1,1){45}}
\put(0,90){\line(1,1){15}}
\put(0,30){\line(1,-1){30}}
\put(0,60){\line(1,-1){60}}
\put(0,90){\line(1,-1){90}}
\put(15,105){\line(1,-1){105}}
\put(45,105){\line(1,-1){90}}
\put(75,105){\line(1,-1){60}}
\put(105,105){\line(1,-1){30}}
\put(28,-10){1}
\put(57,-10){2}
\put(87,-10){3}
\put(117,-10){4}
\put(137,12){5}
\put(137,42){6}
\put(137,72){7}
\put(136,105){8}
\put(102,108){9}
\put(72,108){10}
\put(42,108){11}
\put(12,108){12}
\put(-14,87){13}
\put(-14,57){14}
\put(-14,27){15}
\put(-11,-8){16}
\tiny
\put(132,-5){(4${1\over2}$)}
\put(-15,103){(12${1\over2}$)}
\end{picture}
\bigskip
\caption{A billiard table with $A=9$, $B=16$, $C=25$
(all squares), and $D=32$ (half a square) and
perimeter $P=16$.}
\label{fig:BilliardTable3}
\end{figure}
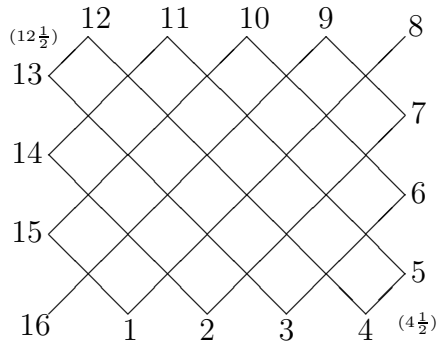

We may delete 16, or append 17, giving the 15-,
16- and 17-chains of Figure \ref{fig:RecamanProblem}.

It is possible to accommodate other numbers by using
billiard tables with more than four corners!
Figure \ref{fig:BilliardTable4} shows such a table with corners at
4.5, 8.5, 9, 12.5, 24.5, and 32. The
corner at 8.5 is reflex; the others are right.
The perimeter is 39. There are two pockets: a
conventional corner pocket at 32, and a
side pocket at 9. The path
between these two pockets is complete.

\begin{figure}[h]  
\begin{picture}(350,240)(-20,0)
\put(30,0){\line(1,1){225}}
\put(60,0){\line(1,1){225}}
\put(90,0){\line(1,1){225}}
\put(120,0){\line(1,1){225}}
\put(150,0){\line(1,1){210}}
\put(180,0){\line(1,1){180}}
\put(210,0){\line(1,1){150}}
\put(240,0){\line(1,1){105}}
\put(270,0){\line(1,1){75}}
\put(300,0){\line(1,1){45}}
\put(330,0){\line(1,1){15}}
\put(0,0){\line(1,1){225}}
\put(0,30){\line(1,1){195}}
\put(0,60){\line(1,1){165}}
\put(0,90){\line(1,1){135}}
\put(0,120){\line(1,1){105}}
\put(0,150){\line(1,1){75}}
\put(0,180){\line(1,1){45}}
\put(0,210){\line(1,1){15}}
\put(0,30){\line(1,-1){30}}
\put(0,60){\line(1,-1){60}}
\put(0,90){\line(1,-1){90}}
\put(0,120){\line(1,-1){120}}
\put(0,150){\line(1,-1){150}}
\put(0,180){\line(1,-1){180}}
\put(0,210){\line(1,-1){210}}
\put(15,225){\line(1,-1){225}}
\put(45,225){\line(1,-1){225}}
\put(75,225){\line(1,-1){225}}
\put(105,225){\line(1,-1){225}}
\put(135,225){\line(1,-1){210}}
\put(165,225){\line(1,-1){180}}
\put(195,225){\line(1,-1){150}}
\put(225,225){\line(1,-1){120}}
\put(255,225){\line(1,-1){105}}
\put(285,225){\line(1,-1){75}}
\put(315,225){\line(1,-1){45}}
\put(345,225){\line(1,-1){15}}
\put(237,-10){1}
\put(267,-10){2}
\put(297,-10){3}
\put(327,-10){4}
\put(347,12){5}
\put(347,42){6}
\put(347,72){7}
\put(347,102){8}
\put(362,117){9}
\put(362,147){10}
\put(362,177){11}
\put(362,207){12}
\put(338,227){13}
\put(308,227){14}
\put(278,227){15}
\put(248,227){16}
\put(218,227){17}
\put(188,227){18}
\put(158,227){19}
\put(128,227){20}
\put(98,227){21}
\put(68,227){22}
\put(38,227){23}
\put(8,227){24}
\put(-12,206){25}
\put(-12,176){26}
\put(-12,146){27}
\put(-12,116){28}
\put(-12,86){29}
\put(-12,56){30}
\put(-12,26){31}
\put(-12,-10){32}
\put(23,-10){33}
\put(53,-10){34}
\put(83,-10){35}
\put(113,-10){36}
\put(143,-10){37}
\put(173,-10){38}
\put(203,-10){39}
\tiny
\put(343,-5){(4${1\over2}$)}
\put(358,223){(12${1\over2}$)}
\put(343,116){(8${1\over2}$)}
\put(-10,223){(24${1\over2}$)}
\end{picture}
\bigskip
\caption{A billiard table with six corners.}
\label{fig:BilliardTable4}
\end{figure}
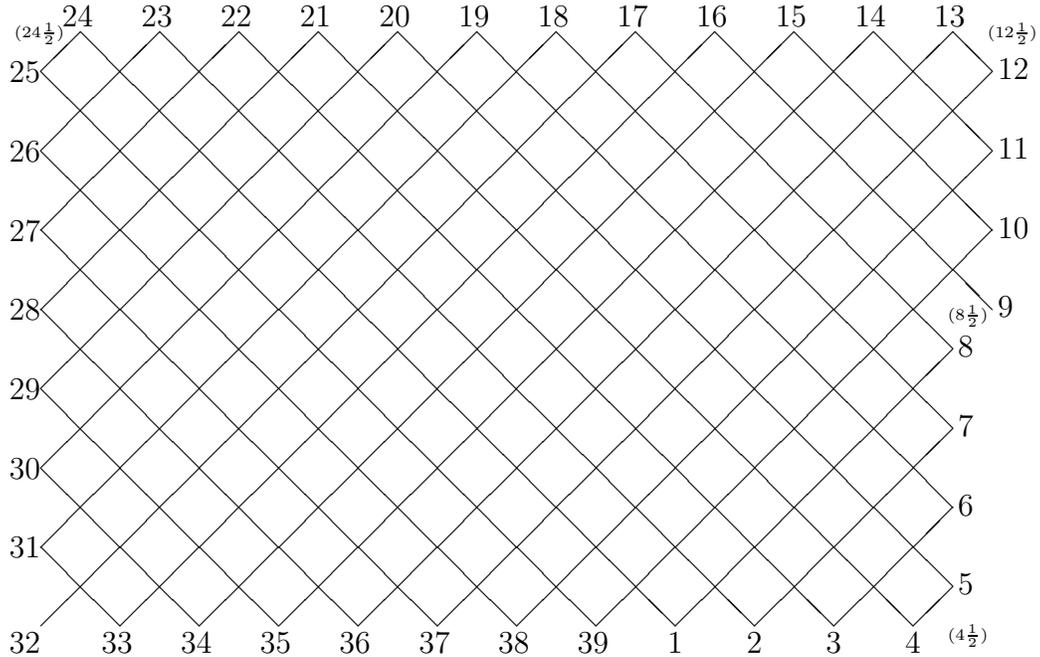

\textbf{Square necklaces.}  In order to connect the
two pockets and make a necklace, we must be sure
that they sum to a square.  Two half-squares
summing to a square are provided by the parametric
equation
$$\left((r+s)^2-2r^2\right)^2+\left((r+s)^2-2s^2\right)^2
=2\left(r^2+s^s\right)^2$$
For example, $1^2+7^2=2\cdot5^2$.  We multiply the
solution by 6 to get the parity right and to avoid the
sides having a common factor of 3. $42^2-6^2=2^6\cdot3^3$
can be arranged as the difference of two odd squares,
which are not multiples of 3, in just two different ways,
$43^2-11^2$ and $433^2-431^2$.  Billiard tables with
half these squares as corners have perimeters 1728 and
185725.  Their double-sides, $(5\cdot17,31\cdot53)$ and
$(2^6\cdot3^3,11\cdot43\cdot389)$ are coprime, so the
chains contain every integer on the perimeter.  Moreover,
the ends of the chains are ${1\over2}6^2$ and
${1\over2}42^2)$ which sum to $30^2$
so that they may be joined to form necklaces.

Here are some small square necklaces.  The bold numbers
are $6x$, $6y$.

\begin{center}
\begin{tabular}{|c|c|c|c|c|} \hline
   & $x^2+y^2=2z^2$ & corners are half      & double sides
   & perimeter \\
$r,s$ &    $x,y$    &  the squares of:      & are coprime
   &  $P$ \\ \hline
2,1 & 1,7   & {\bf6},11,{\bf42},43          & 85,1643
   & 1728 \\
3,2 & 7,17  & {\bf42},{\bf102},119,151      & 3757,8640
   & 12397 \\
4,3 & 17,31 & {\bf102},{\bf186},197,251     & 4213,24192
   & 28405 \\
7,3 & 1,41  & {\bf6},23,{\bf246},247        & 493,59987
   & 60480 \\
7,5 & 23,47 & 109,{\bf138},269,{\bf282}     & 7163,53317
   & 60480 \\
5,4 & 31,49 & {\bf186}.{\bf294},373,437     & 51840,52693
   & 104533 \\
7,3 & 1,41  & {\bf6},{\bf246},397,467       & 60480,97093
   & 157573 \\
2,1 & 1,7   & {\bf6},{\bf42},431,433        & 1728,183997
   & 185725 \\
5,3 & 7,23  & {\bf42},{\bf138},859,869      & 17280,718837
   & 736117 \\
7,3 & 1,41  & {\bf6},{\bf246},2153,2167     & 60480,4574893
   & 4635373 \\
3,2 & 7,17  & {\bf42},{\bf102},2159,2161    & 8640,4650877
   & 4659517 \\
5,4 & 31,49 & {\bf186},{\bf294},2587,2597   & 51840,6606133
   & 6657973 \\
5,3 & 7,23  & {\bf42},{\bf138},4319,4321    & 17280,18634717
   & 18651997 \\
4,3 & 17,31 & {\bf102},{\bf186},6047,6049   & 24192,36531613
   & 36555805 \\
7,5 & 23,47 & {\bf138},{\bf282},15119,15121 & 60480,228504637
   & 228565117 \\

\hline
\end{tabular}
\end{center}

Of course, if one looked for square chains by
putting halves of odd squares at the corners of
a billiard table, then, by Theorem 0 of number
theory, namely that odd squares are congruent to 1 mod 8,
we would find that our tour broke up into
four separate loops, those containing 0 and 1,
$-1$ and 2, $-2$ and 3, and those containing
$-3$ and 4 modulo 8.  However, we are
able to make a single necklace, by breaking the
loops at places which sum to a square on other
loops.  For example, the billiard table with
corners at 4.5, 24.5, 40.5 and 60.5 yields
four 18-loops which may be connected to form a
72-necklace as follows


\begin{center}
$\ldots$ 1 --- 3 $\ldots$ 6 --- 10
 $\ldots$ 71 --- 29 $\ldots$ 52 ---48 $\ldots$
\end{center}

where the dots represent the other 16 members of
each of the four loops.

More generally, if the odd squares are $(s-2r)^2$,
$(s+2r)^2$, $(2s-r)^2$ and $(2s+r)^2$, we will
have $n=3(s^2-r^2)$. In order that the point 1
is on an edge adjacent to the smallest square,
we must have $s\geq r+\sqrt{(9r^2-1)/2}$.

\textbf{Cubic chains.}  The billiard table with
corners at \{62.5, 171.5, 256, 365\} has
perimeter 387.  The sides are relatively prime,
so the path between the pockets is complete.
The adjacent pair-sums are 125, 343, 512 and 730.
In pursuit of a chain all of whose pair-sums
are cubes, we move the corner from 365 to 364.5,
and insert a new reflex corner at 386.5 and a
side pocket at 387. A detailed calculation
reveals that the path between the pockets at
387 and 256 is complete, so we then have a
cubic chain among the numbers from 1 through 387.
This chain uses only the cubes 125, 343, 512
and 729.

By deleting the endpoint at 387 we obtain a cubic
chain among the numbers from 1 through 386. Since
each of our Fibonacci chains also has a pocket at
its highest number, we can similarly delete that
maximum number and obtain a Fibonacci chain among
the numbers from 1 to $F_k -1$, for any $k>3$.
We leave the reader to design billiard tables with
extra corners to accommodate such numbers.

No doubt, in answer to Nob Yoshigara's question,
cubic chains and necklaces exist for all
sufficiently large $n$, but not for $n<295$.
When $n=295$ the graph has just two monovalent
vertices, at 216 and 256, which have to form the
tails of a chain, but it cannot be completed.
We can construct a {\bf cubic necklace} if we
can find a number which is the sum of two odd
cubes in two different ways.  If the cubes are
$a^3 + d^3 = b^3 + c^3$, then we also need that
$a^3<c^3-b^3$ (to make sure the necklace includes
all the numbers from 1 on) and that
$\gcd(c^3-b^3,b^3-a^3)=2$ (else the necklace will
split up into smaller necklaces).  The smallest try is
$23^3+163^3=121^3+137^3$, but the relevant gcd is 14
and we have 7 small necklaces each of length 114256
instead of a single necklace of length 799792.
Fortunately, Andrew Bremner observes that
$21^3+257^3=167^3+231^3$ where
$167^3-21^3=2\cdot13\cdot31\cdot73\cdot79$ and
$231^3-167^2=2^6\cdot119827$ have gcd 2, so that if we
put halves of these four odd cubes at the corners of
a billiard table, we will have a cubic necklace of
length the latter number, 7668928.  Surely there
are smaller ones.

\textbf{Triangular chains} exist for $n=2$ and probably
for all $n\geq9$.  Necklaces appear to exist for
$n\geq12$, except for $n=14$.  We would like to
see proofs of these statements, which we have verified
to $n=70$.  It is easy to find arbitrarily large
triangular chains, by taking numbers which are the
sum of two triangular numbers in two different ways.
If the triangular numbers $A<B<C<D$ are odd and not
all multiples of three (in fact two will have to be
multiples of 3 and two of them congruent to 1 mod 3),
then, by placing their halves at the corners of a
billiard table, we will have a {\bf triangular necklace}
of length $C-A$, provided that the sides of the
table are coprime, and that $A<C-B$ (else we will
lose some of the beads from the beginning of the
necklace).

Here are some triangular necklaces.

\begin{center}
\begin{tabular}{|c|c|c|c|c|} \hline
 corners are half the & sides are &  perimeter; \\
 triangular numbers:  &  coprime  & \# of beads \\ \hline
1, 15, 91, 105        & 7, 38     &   90 \\
55, 153, 253, 351     & 49, 50    &  198 \\
91, 231, 325, 465     & 47, 70    &  234 \\
15, 253, 465, 703     & 106, 119  &  450 \\
21, 55, 561, 595      & 17, 253   &  540 \\
45, 153, 595, 703     & 54, 221   &  550 \\
91, 253, 741, 903     & 81, 244   &  650 \\
253, 703, 1035, 1485  & 166, 225  &  782 \\
3, 325, 903, 1225     & 161, 289  &  900 \\
325. 703, 1275, 1653  & 189, 286  &  950 \\
45, 91, 1035, 1081    & 23, 472   &  990 \\
465, 703, 1653, 1891  & 119, 475  & 1188 \\
171, 1225, 1431, 2485 & 103, 527  & 1260 \\
45, 325, 1431, 1711   & 140, 553  & 1386 \\
1, 55, 1431, 1485     & 27, 688   & 1430 \\
45, 1035, 1711, 2701  & 338, 495  & 1666 \\
1, 435, 1711, 2145    & 217, 638  & 1710 \\
171, 703, 1953, 2485  & 266, 625  & 1782 \\
91, 153, 1891, 1953   & 31, 869   & 1800 \\
55, 1485, 1891, 3321  & 203, 715  & 1836 \\
105, 595, 2211, 2701  & 245, 808  & 2106 \\
15, 231, 2485, 2701   & 108, 1127 & 2470 \\
91, 1485, 2701, 4095  & 608, 697  & 2610 \\
55, 435, 2701, 3081   & 190, 1133 & 2646 \\
21, 595, 3081, 3655   & 287, 1243 & 3060 \\
3, 325, 3081, 3403    & 161, 1378 & 3078 \\
171, 253, 3321, 3403  & 41, 1584  & 3250 \\
1, 91, 4005, 4095     & 45, 1957  & 4004 \\
\hline
\end{tabular}
\end{center}

The existence of `triangular triples', such as
---29---91---62---, ---44---92---61---, ---27---93---78---
in which each pair sums to a triangular number, enable
us to expand the 90-necklace at the head of the last list,
to 91-, 92- and 93-necklaces, as in Figure \ref{fig:NecklaceExtension}.

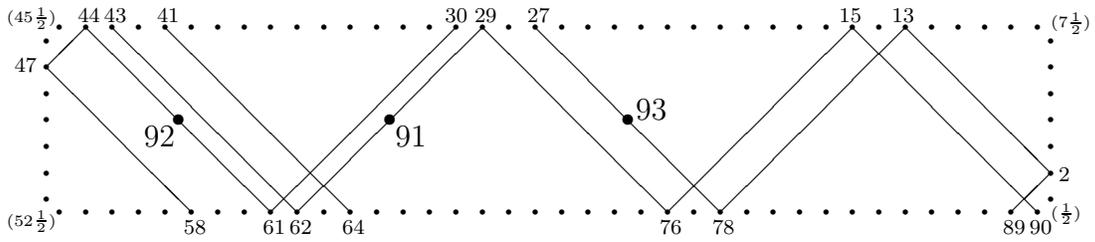
\begin{figure}[h]  
\begin{picture}(400,100)(0,0)
\put(5,55){\line(1,1){15}}
\put(90,0){\line(1,1){70}}
\put(100,0){\line(1,1){70}}
\put(240,0){\line(1,1){70}}
\put(260,0){\line(1,1){70}}
\put(370,0){\line(1,1){15}}
\put(5,55){\line(1,-1){55}}
\put(20,70){\line(1,-1){70}}
\put(30,70){\line(1,-1){70}}
\put(50,70){\line(1,-1){70}}
\put(170,70){\line(1,-1){70}}
\put(190,70){\line(1,-1){70}}
\put(310,70){\line(1,-1){70}}
\put(330,70){\line(1,-1){55}}
\put(388,12){{\scriptsize 2}}
\put(377,-8){{\scriptsize 90}}
\put(367,-8){{\scriptsize 89}}
\put(325,72){{\scriptsize 13}}
\put(305,72){{\scriptsize 15}}
\put(257,-8){{\scriptsize 78}}
\put(237,-8){{\scriptsize 76}}
\put(187,72){{\scriptsize 27}}
\put(167,72){{\scriptsize 29}}
\put(156,72){{\scriptsize 30}}
\put(117,-8){{\scriptsize 64}}
\put(97,-8){{\scriptsize 62}}
\put(87,-8){{\scriptsize 61}}
\put(47,72){{\scriptsize 41}}
\put(57,-8){{\scriptsize 58}}
\put(27,72){{\scriptsize 43}}
\put(17,72){{\scriptsize 44}}
\put(-7,53){{\scriptsize 47}}
\multiput(10,0)(10,0){38}{\circle*{2}}
\multiput(10,70)(10,0){38}{\circle*{2}}
\multiput(5,5)(0,10){7}{\circle*{2}}
\multiput(385,5)(0,10){7}{\circle*{2}}
\put(55,35){\circle*{4}}
\put(135,35){\circle*{4}}
\put(225,35){\circle*{4}}
\put(42,25){92}
\put(137,25){91}
\put(228,35){93}
\tiny
\put(-10,-5){(52${1\over2}$)}
\put(-10,72){(45${1\over2}$)}
\put(385,-2){(${1\over2}$)}
\put(385,70){(7${1\over2}$)}
\end{picture}
\medskip
\caption{Expanding a `triangular' 90-necklace by one,
two or three beads.}
\label{fig:NecklaceExtension}
\end{figure}

In the same way we can insert ---101---199---152---
and ---100---200---53--- into the 198-necklace which
is the second in the list.

\textbf{Pentagonal chains}, i.e., those in which
adjacent links sum to the pentagonal numbers,
1, 2, 5, 7, 12, 15, $\ldots$, ${1\over2}n(3n\pm1)$,
appear to exist for all $n\geq4$ (e.g., 1---4---3---2)
and necklaces for all $n\geq9$.  E.g.,
 --6--1--4--8--7--5--2--3--9--6-- or

\normalfont\begin{figure}[h]
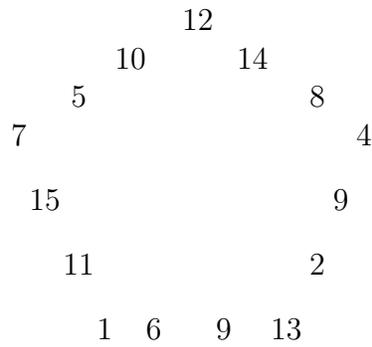
       
\begin{center}
\begin{tabular}{c@{\hspace{1pt}}c@{\hspace{1pt}}c
 @{\hspace{1pt}}c@{\hspace{1pt}}c@{\hspace{0pt}}c
 @{\hspace{0pt}}c@{\hspace{1pt}}c@{\hspace{2pt}}c
 @{\hspace{1pt}}c@{\hspace{3pt}}c@{\hspace{3pt}}c
 @{\hspace{3pt}}c}
 & & & & & & \ \ 12 & & & & & & \\
 & & & & 10 & & & & 14 & & & & \\
 & & 5 & & & & & & & & 8 & & \\
7 & & & & & & & & & & & & 4 \\[10pt]
 & 15 & & & & & & & & & & 9 & \\[10pt]
 & & 11 & & & & & & & & 2 & & \\[10pt]
 & & & 1 & & 6 & & 9 & & 13 & & &
\end{tabular}
\end{center}
\caption{A necklace with adjacent pairs of beads adding to
  pentagonal numbers.}
	\label{fig:LastNecklace}
\end{figure}

This has been checked to $n=49$.  Here are some other
necklaces.

\begin{center}
\begin{tabular}{|c|c|c|c|c|} \hline
 corners are half the & sides are &  perimeter; \\
 pentagonal numbers:  &  coprime  & \# of beads \\ \hline
15, 35, 57, 77    & 10, 11  &  42 \\
1, 7, 51, 57      & 3, 22   &  50 \\
7, 35, 117, 145   & 14, 41  & 110 \\
35, 77, 145, 187  & 21, 34  & 110 \\
15, 117, 145, 247 & 51, 14  & 130 \\
57, 155, 247, 345 & 49, 46  & 190 \\
7, 145, 287, 425  & 69, 71  & 280 \\
1, 15, 287, 301   & 7, 136  & 286 \\
7, 51, 301, 345   & 22, 125 & 294 \\
7. 77, 425, 495   & 35, 174 & 418 \\
\hline
\end{tabular}
\end{center}


\textbf{Prime chains} have been considered from time to
time \cite{G,K}, but as in all cases except the
Fibonacci numbers and the Lucas numbers, existence
proofs for all large enough $n$ are elusive.

\begin{theorem} \label{th:2}
There is a chain formed with the
numbers 1 to $n$ with each adjacent pair adding to
a Lucas number, just if $n=5$, or $L_k$ or
$L_k - 1$, where $L_k$ is a Lucas number with $k\geq2$
($L_2=3$, $L_3=4$, $L_{n+1}=L_n+L_{n-1}$).  The chain
is essentially unique.
\end{theorem}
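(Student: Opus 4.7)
The plan is to mirror the proof of Theorem~\ref{th:1} essentially verbatim, because the Lucas sequence shares every arithmetic feature on which the Fibonacci argument relied: the same recurrence $L_{k+1}=L_k+L_{k-1}$, coprimality of consecutive terms (since $\gcd(L_0,L_1)=\gcd(2,1)=1$), and the fact that $L_k$ is even precisely when $3\mid k$. First I would dispose of the base cases $n\leq 11$ by direct inspection of the adjacency graph on $\{1,\ldots,n\}$ whose edges join pairs summing to a Lucas number. This produces essentially unique chains for $n\in\{2,3,4,6,7,10,11\}$ (the values $L_k-1$ and $L_k$ with $2\leq k\leq 5$), together with the sporadic chain 4--3--1--2--5 for $n=5$, and confirms non-existence for $n=8,9$ by counting the monovalent vertices each case forces.

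Next, the induction step is the Lucas analogue of Figure~\ref{fig:FibExtension}. Given a chain for $n=L_k-1$ or $n=L_k$ with $k\geq 5$, I would append the pairs $(L_k+i,\,L_{k+1}-i)$ for $1\leq i\leq \lfloor(L_{k-1}-1)/2\rfloor$, linking each pair at the existing chain-vertex $i$ via the common sum $L_{k+1}$, and then attach $L_{k+1}$ itself at the chain's $L_k$-end via $L_k+L_{k+1}=L_{k+2}$. When $L_{k+1}$ is even (equivalently $3\mid k+1$), the bound $(L_{k-1}-1)/2$ is not an integer; in that case the leftover vertex is accommodated by attaching $L_{k+2}/2$ to the existing $L_{k-1}/2$ through the sum $L_{k+1}$, producing the Lucas analogue of the ``ball'' depicted in Figure~\ref{fig:zeroball+chain}. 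Equivalently, one could draw a Lucas billiards table with corners at halves of three consecutive Lucas numbers and invoke the non-degeneracy lemma, which applies because consecutive Lucas numbers are coprime.

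Finally, for non-existence at intermediate $n$ in the range $L_k<n<L_{k+1}-1$, I would track monovalent vertices as numbers are adjoined one at a time: for $L_k<m<L_{k+1}$ the integer $m$ has only one partner already present in $\{1,\ldots,m-1\}$, namely $L_{k+1}-m$, so each such $m$ is born monovalent, and the monovalency of its partner vanishes only when that partner is itself inserted. A direct census shows the monovalent count stays above two until exactly $n=L_{k+1}-1$, at which point a chain reappears, with $L_{k+1}$ then available for appending to give the $L_{k+1}$-chain. Uniqueness at each stage follows because the extension in the previous paragraph is forced, with only the standard freedom of ball-tail placement familiar from the Fibonacci case. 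The principal obstacle is this monovalency bookkeeping: accidental Lucas sums among small numbers (for instance $5+6=11$, $7+11=18$, $8+10=18$ giving extra edges at $n=11$) must be accounted for so that precisely the boundary cases $n=L_k-1,L_k$, together with the sporadic $n=5$, survive as chains and no others do.
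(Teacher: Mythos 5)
Your overall route is exactly the one the paper intends: the paper gives no proof of Theorem~\ref{th:2} beyond the remark that the proof can follow either of the methods used for Theorem~\ref{th:1}, and your proposal is a faithful execution of the first of those methods (base cases by inspection of the adjacency graph, then the link-insertion induction of Figure~\ref{fig:FibExtension} with an occasional ``ball'' as in Figure~\ref{fig:zeroball+chain}). The base-case census is right, including the sporadic chain 4---3---1---2---5 for $n=5$ and the failures at $n=8,9$, and the arithmetic facts you need do transfer: consecutive Lucas numbers are coprime, $L_k$ is even exactly when $3\mid k$, and $\tfrac12 L_{k+2}+\tfrac12 L_{k-1}=L_{k+1}$.

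However, two bookkeeping statements are wrong as written, and one of them breaks the induction step if taken literally. First, the ``ball'' case is triggered when $L_{k-1}$ is even, i.e.\ $3\mid k-1$ (equivalently $L_{k+2}$ is even, so that both $\tfrac12 L_{k+2}$ and $\tfrac12 L_{k-1}$ are integers); you wrote ``when $L_{k+1}$ is even (equivalently $3\mid k+1$),'' which is a different residue class. At $k\equiv 2\pmod 3$ your rule fires and tries to append the non-integer $\tfrac12 L_{k+2}$, while at $k\equiv 1\pmod 3$ (e.g.\ $k=7$, where $38=\tfrac12 L_{9}$ must be hung on $9=\tfrac12 L_{6}$ with sum $47=L_8$) it fails to fire and leaves that vertex stranded. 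Second, in the non-existence census it is not true that every $m$ with $L_k<m<L_{k+1}$ has only the single partner $L_{k+1}-m$ among $1,\ldots,m-1$: for $m>\tfrac12 L_{k+2}$ the partner $L_{k+2}-m$ is also already present (for instance $17$ has partners $1$ and $12$), and this is precisely the mechanism by which bivalency is restored and a chain reappears at $n=L_{k+1}-1$. Both are local, fixable slips rather than a wrong approach, but the monovalency count on which your non-existence argument rests has to be redone with the second partner taken into account.
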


The proof can follow either of the methods used for
Theorem 1.

There are corresponding theorems for sequences
satisfying the same recurrence relation.  For example,
the chains that can be formed using the numbers
4, 5, 9, 14, 23, 37, $\ldots$ have length
one of those numbers, or one less than one of them.

\subsubsection*{Appendix on square necklaces.}
In the sixteen years since this paper was written, one author has collected square necklaces for
$32\leq n\leq 252$. They are not unique. Figure~\ref{fig:square40} shows a pair of necklaces for
$n=40$.

\begin{figure}[h]       
\begin{center}
\begin{tabular}{c@{\hspace{4pt}}c@{\hspace{4pt}}c
 @{\hspace{4pt}}c@{\hspace{4pt}}c@{\hspace{4pt}}c
 @{\hspace{4pt}}c@{\hspace{4pt}}c@{\hspace{4pt}}c
@{\hspace{4pt}}c@{\hspace{4pt}}c@{\hspace{4pt}}cc
@{\hspace{4pt}}c@{\hspace{4pt}}c@{\hspace{4pt}}c
@{\hspace{4pt}}c@{\hspace{4pt}}c@{\hspace{4pt}}c
@{\hspace{4pt}}c@{\hspace{4pt}}c@{\hspace{4pt}}c
@{\hspace{4pt}}c@{\hspace{4pt}}c}
 1 & 3  & 6  & 19 & 30 & 34 & 15 & 10 & 39 & 25 & 24 & \qquad \qquad &  3 & 6  & 19 & 30 & 34 & 15 & 10 & 39 & 25 & 24 & 40 \\
 8 &    &    &    &    &    &    &    &    &    & 40 &               & 22 &    &    &    &    &    &    &    &    &    &  9 \\
17 &    &    &    &    &    &    &    &    &    &  9 &               & 27 &    &    &    &    &    &    &    &    &    & 16 \\
32 &    &    &    &    &    &    &    &    &    & 16 &               & 37 &    &    &    &    &    &    &    &    &    & 33 \\
 4 &    &    &    &    &    &    &    &    &    & 33 &               & 12 &    &    &    &    &    &    &    &    &    & 31 \\
21 &    &    &    &    &    &    &    &    &    & 31 &               & 13 &    &    &    &    &    &    &    &    &    & 18 \\
28 &    &    &    &    &    &    &    &    &    & 18 &               & 36 &    &    &    &    &    &    &    &    &    &  7 \\
36 &    &    &    &    &    &    &    &    &    &  7 &               & 28 &    &    &    &    &    &    &    &    &    & 29 \\
13 &    &    &    &    &    &    &    &    &    &  2 &               & 21 &    &    &    &    &    &    &    &    &    & 20 \\
12 &    &    &    &    &    &    &    &    &    & 23 &               &  4 &    &    &    &    &    &    &    &    &    &  5 \\
37 & 27 & 22 & 14 & 35 & 29 & 20 &  5 & 11 & 38 & 26 & \qquad \qquad & 32 & 17 &  8 &  1 & 35 & 14 &  2 & 23 & 26 & 38 & 11
  \end{tabular}
\end{center}
\caption{A pair of square necklaces for $n=40$.}
\label{fig:square40}
\end{figure}

At a recent MathFest presentation by the other author, a member of the audience claimed to have
used a computer to find square necklaces for $32\leq n\leq 1000$.

We were delighted to learn that the problem was recently solved by Robert Gerbicz; see the Mersenne
Forum blog thread \cite{Ge1}. Square necklaces exist for any length of the form $n = (71*25^k -
1)/2$ with $k \geq 0$. A generalization of this construction proves the existence of square
necklaces of any length $n \geq 32$ and square chains of any length $n \geq 25$. Gerbicz's C code
for generating square necklaces is available for download~\cite{Ge2}.

\textbf{Acknowledgment.}
Thanks to Alex Fink for finding one of the necklaces in Figure \ref{fig:square40} and to Ethan White 
for discovering Robert Gerbicz's blog post.

%

\bigskip
\footnoterule

\footnotesize{MSC: Primary 11B75}

\end{document}